\documentclass[a4paper,12pt]{article}
\usepackage{lmodern}
\usepackage[utf8]{inputenc}
\usepackage{geometry}
\usepackage{indentfirst}
\usepackage{caption}
\usepackage{amssymb,amsmath,amsthm}
\usepackage[hidelinks]{hyperref}
\usepackage{csquotes}
\usepackage{fancyhdr}
\usepackage{enumitem}
\usepackage{paracol}


\geometry{  left = 2.0cm,
           right = 2.0cm,
             top = 2.5cm,
          bottom = 2.0cm,
          headheight = 2cm
} 

\hypersetup{unicode = true,
        linktocpage = false,
         colorlinks = true,
         allcolors = red
}

\newcommand{\R}{{\mathbb{R}}}

\newcommand{\Q}{{\mathbb{Q}}}
\newcommand{\Co}{{\mathbb{C}}}
\newcommand{\N}{{\mathbb{N}}}
\newcommand{\Pri}{{\mathbb{P}}}

\newcommand{\Bor}[1]{\mathcal{B}\left(#1\right)}
\newcommand{\ul}[1]{\underline{#1}}
\newcommand{\meas}{\mathrm{meas}}

\newcommand{\gB}{\mathfrak{B}}

\DeclareMathOperator{\rank}{rank\!}

\newtheorem{theorem}{Theorem}

\theoremstyle{definition}
\newtheorem*{remark}{Remark}


\pagestyle{fancy}
\fancyhead[R]{\today \qquad \thepage}

\begin{document}

\title{{A modification of the mixed joint universality theorem \\ for a class of zeta-functions}}

\author{Benjaminas Togobickij, Roma Ka{\v c}inskait{\.e}}

\thispagestyle{empty}

\maketitle

\begin{abstract}
    The property of zeta-functions on mixed joint universality in the Voronin's sense states that any two holomorphic functions can be approximated simultaneously with accuracy $\varepsilon>0$ by suitable vertical shifts of the pair consisting from the Riemann zeta-  and Hurwitz zeta-functions. In \cite{RKKM2015}, it was shown rather general result, i.e., an approximating pair was composed of the Matsumoto zeta-functions' class and the periodic Hurwitz zeta-function.
    In this paper, we prove that this set of shifts has a strict positive density for all but at most countably many $\varepsilon>0$. 
    Also, we give the concluding remarks on certain more general mixed tuple of zeta-functions.

    \bigskip
    \textit{Keywords:}  mixed joint universality, joint value distribution, periodic Hurwitz zeta-function, Matsumoto zeta-function, simultaneous approximation.

    \medskip
    {{\bf{AMS classification:}} 11M06, 11M41, 11M36.}
\end{abstract}

\section{Introduction}

Denote by  $\N$, $\N_0$, $\Pri$, $\Q$, $\R$, and  $\Co$ the sets of positive integers, non-negative integers, prime numbers, rational numbers, real numbers, and complex numbers, respectively, and by $s=\sigma+it$ a complex variable.


As it is well-known, the Riemann zeta-function $\zeta(s)$ is defined by the Dirichlet series and has a representation by the Euler product over primes, i.e.,
    $$
    \zeta(s)=\sum_{m=1}^{\infty}\frac{1}{m^s}=\prod_{p \in \Pri}\bigg(1-\frac{1}{p^s}\bigg), \quad \sigma>1.
    $$
    The Hurwitz zeta-function $\zeta(s,\alpha)$ with the real parameter $\alpha$, $0 < \alpha \leq 1$, is given by the series
    \[
    \zeta(s,\alpha)=\sum_{m=0}^{\infty}\frac{1}{(m+\alpha)^s}, \quad \sigma>1.
    \]
Both of the functions $\zeta(s)$ and $\zeta(s,\alpha)$ have an analytic continuation to the whole complex plane, except for a simple pole at $s=1$ with residue 1. Note that the function $\zeta(s,\alpha)$ has an Euler product over primes only for the cases $\alpha=1$ and  $\alpha=\frac{1}{2}$, when $\zeta(s,1)=\zeta(s)$ and $\zeta(s,\frac{1}{2})=(2^s-1)\zeta(s)$.

In the first decade of XXI century, the new type of universality was discovered by H.~Mishou \cite{HM2007} and independently by J.~Steuding and J.~Sander \cite{SJSJ2006}. They have opened so-called mixed joint universality in Voronin's sense for the Riemann zeta- and the Hurwitz zeta-functions. More precisely, they proved that a pair of analytic functions is simultaneously approximated by shifts of a pair $\big(\zeta(s+i\tau),\zeta(s+i\tau,\alpha)\big)$ with transcendental $\alpha$.

For the brevity, throughout this paper we use following notations and definitions. Let $D(a,b)=\{s\in \Co: a<\sigma<b\}$ for every  real numbers  $a<b$.
Denote by $\meas \{A\}$ the Lebesgue measure of a measurable set $A \subset \R$, by $\mathcal{B}(S)$ the set of all Borel subsets of a topological space $S$, and by $H(D)$ the set of all holomorphic functions on $D$.
For any compact set $K\subset\Co$, denote by $H^c(K)$ the set of all complex-valued continuous functions defined on $K$ and holomorphic in the interior of $K$, while  by $H^c_0(K)$ denote the subset of $H^c(K)$, consisting of all elements which are non-vanishing on $K$.

\begin{theorem}[see {\cite[Theorem~2]{HM2007}}]\label{te:Mish}
Suppose that $\alpha$ is a transcendental number. Let $K_1$ and $K_2$ be compact subsets of $D(\frac{1}{2},1)$ with connected complements. Then, for any $f_1(s)\in H^c_0(K_1)$, $f_2(s)\in H^c(K_2)$ and every $\varepsilon>0$, it holds that
\begin{equation}\label{eq:MishTeIneq}
            \liminf_{T\to\infty}\frac{1}{T}\meas\bigg\{\tau\in [0, T]: \sup_{s\in K_1}\left\vert\zeta(s+i\tau)-f_1(s)\right\vert<\varepsilon, \sup_{s\in K_2}\left\vert\zeta(s+i\tau,\alpha)-f_2(s)\right\vert<\varepsilon\bigg\}>0.
\end{equation}
\end{theorem}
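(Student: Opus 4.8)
The plan is to follow the standard probabilistic route to Voronin-type universality, keeping careful track of the interplay between the Euler-product factor $\zeta(s+i\tau)$ and the non-Euler-product factor $\zeta(s+i\tau,\alpha)$. Let $\gamma=\{s\in\Co:|s|=1\}$ and form the infinite-dimensional torus $\Omega=\Omega_1\times\Omega_2$, where $\Omega_1=\prod_{p\in\Pri}\gamma$ and $\Omega_2=\prod_{m\in\N_0}\gamma$, equipped with the product $m_H=m_{1H}\times m_{2H}$ of Haar probability measures. For $\omega=(\omega_1,\omega_2)\in\Omega$ extend $\omega_1$ to a completely multiplicative function on $\N$ and set
\[
\zeta(s,\omega_1)=\prod_{p\in\Pri}\left(1-\frac{\omega_1(p)}{p^s}\right)^{-1},\qquad
\zeta(s,\alpha,\omega_2)=\sum_{m=0}^{\infty}\frac{\omega_2(m)}{(m+\alpha)^s}.
\]
The sole number-theoretic input is that, since $\alpha$ is transcendental, the set $\{\log p:p\in\Pri\}\cup\{\log(m+\alpha):m\in\N_0\}$ is linearly independent over $\Q$ (a nontrivial $\Q$-relation would force a nonconstant rational function of $\alpha$ to take a rational value, contradicting transcendence). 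This is exactly what makes the one-parameter flow $\varphi_\tau(\omega)=\big((p^{-i\tau}\omega_1(p))_{p},((m+\alpha)^{-i\tau}\omega_2(m))_{m}\big)$ on $\Omega$ ergodic with respect to $m_H$, and it is what will decouple the two coordinates.

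Next I would prove a \emph{joint} functional limit theorem on $H(D(\tfrac12,1))^2$: that
\[
\frac{1}{T}\meas\bigl\{\tau\in[0,T]:\big(\zeta(s+i\tau),\zeta(s+i\tau,\alpha)\big)\in A\bigr\},\qquad A\in\Bor{H(D(\tfrac12,1))^2},
\]
converges weakly, as $T\to\infty$, to the distribution $P$ of the random element $\big(\zeta(s,\omega_1),\zeta(s,\alpha,\omega_2)\big)$. This is assembled in the usual stages: relative compactness of the shifted distributions via mean-square bounds on Dirichlet polynomials together with Cauchy's integral formula and Prokhorov's theorem; identification of the limit first on absolutely convergent truncations (here the Birkhoff--Khintchine ergodic theorem applied to $\varphi_\tau$ moves convergence from the torus to the function space); and then removal of the truncation by an approximation-in-the-mean argument. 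The decisive point is that the product structure of $m_H$, combined with the $\Q$-linear independence above, forces the limit to be a product, $P=P_1\times P_2$, with $P_1$, $P_2$ the one-dimensional limit measures of $\zeta$ and of $\zeta(\cdot,\alpha)$.

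I would then identify $\operatorname{supp}P=\operatorname{supp}P_1\times\operatorname{supp}P_2$. For the Riemann factor, Bagchi's analysis of $\zeta(s,\omega_1)$ gives $\operatorname{supp}P_1=\{g\in H(D(\tfrac12,1)):g(s)\neq0\ \text{for all}\ s\}\cup\{0\}$. For the Hurwitz factor the absence of an Euler product yields the whole space: using the linear independence of $\{\log(m+\alpha)\}_{m\ge0}$, the orthogonality relations for Haar measure, and a rearrangement theorem for conditionally convergent series in the Hilbert space of square-integrable holomorphic functions (Pecherskii-type), one shows $\operatorname{supp}P_2=H(D(\tfrac12,1))$. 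Hence $\operatorname{supp}P$ contains every pair $(g,h)$ with $g$ non-vanishing and $h$ arbitrary.

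Finally I would conclude. Given $f_1\in H^c_0(K_1)$, $f_2\in H^c(K_2)$ and $\varepsilon>0$: since $f_1$ is non-vanishing and $K_1$ has connected complement, write $f_1=e^{g_1}$ with $g_1$ holomorphic in a neighbourhood of $K_1$, and by Mergelyan's theorem pick polynomials $q_1,q_2$ with $\sup_{s\in K_1}|g_1(s)-q_1(s)|$ and $\sup_{s\in K_2}|f_2(s)-q_2(s)|$ small enough that $\sup_{s\in K_1}|e^{q_1(s)}-f_1(s)|<\varepsilon/2$ and $\sup_{s\in K_2}|q_2(s)-f_2(s)|<\varepsilon/2$. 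Since $(e^{q_1},q_2)\in\operatorname{supp}P$, the open set
\[
G=\Bigl\{(g,h)\in H(D(\tfrac12,1))^2:\sup_{s\in K_1}|g(s)-e^{q_1(s)}|<\tfrac{\varepsilon}{2},\ \sup_{s\in K_2}|h(s)-q_2(s)|<\tfrac{\varepsilon}{2}\Bigr\}
\]
has $P(G)>0$, so the portmanteau theorem applied to the weak convergence above gives
\[
\liminf_{T\to\infty}\frac{1}{T}\meas\bigl\{\tau\in[0,T]:\big(\zeta(s+i\tau),\zeta(s+i\tau,\alpha)\big)\in G\bigr\}\geq P(G)>0,
\]
and the triangle inequality turns membership in $G$ into the two sup-norm inequalities of~\eqref{eq:MishTeIneq}. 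The main obstacle is the second step --- establishing the joint limit theorem with the correct \emph{product} limiting measure --- since that is precisely where the transcendence of $\alpha$, via the $\Q$-linear independence of the combined set of logarithms, is indispensable for severing $\zeta(s+i\tau)$ from $\zeta(s+i\tau,\alpha)$.
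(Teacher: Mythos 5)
Your proposal is correct and follows essentially the same route as the proof the paper relies on (Mishou's original argument, cited as \cite[Theorem~2]{HM2007}, and the same machinery the paper itself uses for Theorem~\ref{te:MixJoinUniMod}): a joint limit theorem on $H^2$ via the torus $\Omega_1\times\Omega_2$ with the $\Q$-linear independence of $L(\alpha,\Pri)$ supplied by the transcendence of $\alpha$, identification of the support as the product of the Bagchi support for $\zeta$ and the whole space for $\zeta(s,\alpha)$, then Mergelyan plus the portmanteau lower bound on the open set $G$. No gaps beyond the expected level of detail in a sketch.
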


In 2013, a statement of universality theorems in terms of density was proposed by J.-L.~Mauclaire \cite{JLM2013} and independently by  A.~Laurin{\v c}ikas and L.~Me{\v s}ka \cite{ALLM2014}. Such a statement of Theorem~\ref{te:Mish} was given in \cite{ALLM2016}.
%
%
To be precise, let
$$
L(\alpha,\Pri):=\big\{(\log p: p \in \Pri), (\log(m+\alpha): m \in \N_0)\big\}.
$$
Then the following modification of Theorem~\ref{te:Mish} was obtained.

\begin{theorem}[see {\cite[Theorem~2]{ALLM2016}}]\label{te:MJUM-LM}
Suppose that the elements of the set $L(\alpha,\Pri)$ are linearly independent over the field of rational numbers $\Q$. Let $K_1$, $K_2$, $f_1(s)$, $f_2(s)$ be as in Theorem~\ref{te:Mish}. Then the limit
\[
\lim_{T\to\infty}\frac{1}{T}\meas\left\{\tau\in [0, T]: \sup_{s\in K_1}\left\vert\zeta(s+i\tau)-f_1(s)\right\vert<\varepsilon, \sup_{s\in K_2}\left\vert\zeta(s+i\tau,\alpha)-f_2(s)\right\vert<\varepsilon\right\}>0
\]
exists for all but at most countably many $\varepsilon>0$.
\end{theorem}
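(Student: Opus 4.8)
\bigskip

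\noindent\emph{Proof plan.} The plan is to obtain Theorem~\ref{te:MJUM-LM} from the probabilistic machinery that underlies the proof of Theorem~\ref{te:Mish}, refining the ``$\liminf$'' in \eqref{eq:MishTeIneq} to a genuine limit off a countable exceptional set of $\varepsilon$'s. Write $D=D(\tfrac12,1)$ and let $X=H(D)\times H(D)$, a Polish space under uniform convergence on compact subsets. For $T>0$ let $P_T$ be the probability measure on $(X,\mathcal{B}(X))$ given by
$$
P_T(A)=\frac1T\,\meas\Big\{\tau\in[0,T]:\big(\zeta(s+i\tau),\zeta(s+i\tau,\alpha)\big)\in A\Big\},\qquad A\in\mathcal{B}(X).
$$
From the proof of Theorem~\ref{te:Mish} I would take over two facts. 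First, a \emph{joint limit theorem}: $P_T$ converges weakly, as $T\to\infty$, to a probability measure $P$ on $(X,\mathcal{B}(X))$. Second, the \emph{support identification}: under the hypothesis that the elements of $L(\alpha,\Pri)$ are linearly independent over $\Q$, one has $\operatorname{supp}P=S\times H(D)$, where $S:=\{g\in H(D):g(s)\neq0\ \text{for all }s\in D\}\cup\{g\equiv0\}$. These are precisely the ingredients Mishou uses, and the linear-independence condition is what makes the support factor as a product.

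Next I would transport everything to $\R^2$. Define $h\colon X\to[0,\infty)^2$ by
$$
h(g_1,g_2)=\Big(\sup_{s\in K_1}\big\vert g_1(s)-f_1(s)\big\vert,\ \sup_{s\in K_2}\big\vert g_2(s)-f_2(s)\big\vert\Big).
$$
Since $h$ is continuous, $P_T\circ h^{-1}\Rightarrow P\circ h^{-1}$ on $\R^2$. For $\varepsilon>0$ put $B_\varepsilon=[0,\varepsilon)\times[0,\varepsilon)$, so the quantity appearing in Theorem~\ref{te:MJUM-LM} is exactly $(P_T\circ h^{-1})(B_\varepsilon)$. Its boundary satisfies $\partial B_\varepsilon\subset\ell_1(\varepsilon)\cup\ell_2(\varepsilon)$ with $\ell_1(\varepsilon)=\{(x,y):x=\varepsilon\}$ and $\ell_2(\varepsilon)=\{(x,y):y=\varepsilon\}$. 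The families $\{\ell_1(\varepsilon)\}_{\varepsilon>0}$ and $\{\ell_2(\varepsilon)\}_{\varepsilon>0}$ are each pairwise disjoint, so the finite measure $P\circ h^{-1}$ charges only countably many members of each; hence
$$
E:=\big\{\varepsilon>0:(P\circ h^{-1})(\ell_1(\varepsilon))>0\ \text{or}\ (P\circ h^{-1})(\ell_2(\varepsilon))>0\big\}
$$
is at most countable. For $\varepsilon\notin E$ the box $B_\varepsilon$ is a continuity set of $P\circ h^{-1}$, so by the portmanteau theorem the limit in Theorem~\ref{te:MJUM-LM} exists and equals $P(h^{-1}(B_\varepsilon))$.

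Finally I would verify $P(h^{-1}(B_\varepsilon))>0$ for \emph{every} $\varepsilon>0$, which together with the previous paragraph yields the assertion. This is the classical Mergelyan step. Because $K_1$ has connected complement and $f_1\in H^c_0(K_1)$, there is a continuous branch of $\log f_1$ on $K_1$ which is holomorphic in the interior; Mergelyan's theorem supplies a polynomial $p_1$ with $\sup_{s\in K_1}\vert f_1(s)-e^{p_1(s)}\vert<\varepsilon$, and $e^{p_1}\in S$. Likewise Mergelyan gives a polynomial $p_2$ with $\sup_{s\in K_2}\vert f_2(s)-p_2(s)\vert<\varepsilon$, and $p_2\in H(D)$. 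Thus $(e^{p_1},p_2)\in S\times H(D)=\operatorname{supp}P$ lies in the open set $h^{-1}(B_\varepsilon)$, whence $P(h^{-1}(B_\varepsilon))>0$.

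The genuinely hard part is not the ``soft'' measure-theoretic passage above but is inherited from the proof of Theorem~\ref{te:Mish}: establishing the joint limit theorem and, above all, showing that the shifts $\big(\zeta(s+i\tau),\zeta(s+i\tau,\alpha)\big)$ fill out the \emph{full} product support $S\times H(D)$. The latter needs the Diophantine hypothesis on $L(\alpha,\Pri)$ in order to decouple the prime-indexed randomness coming from the Euler product of $\zeta$ from the $(m+\alpha)$-indexed randomness coming from $\zeta(s,\alpha)$, through a denseness/ergodicity argument on an infinite-dimensional torus. Granting that, turning ``$\liminf>0$'' into ``$\lim$ exists'' costs only the remark that the two one-dimensional marginals of $P\circ h^{-1}$ can put positive mass on at most countably many levels $\varepsilon$.
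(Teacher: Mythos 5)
Your proposal is correct and follows essentially the same route the paper itself uses: for this statement the paper only cites \cite{ALLM2016}, but its own proof of Theorem~\ref{te:MixJoinUniMod} is exactly your scheme --- joint limit theorem, identification of the support, Mergelyan's theorem to place $\big(e^{p_1},p_2\big)$ in the support, and the observation that the relevant boundaries for distinct $\varepsilon$ are pairwise disjoint, so only countably many can carry positive limit measure. One small repair: compute the boundary of $B_\varepsilon=[0,\varepsilon)\times[0,\varepsilon)$ relative to $[0,\infty)^2$ (where all the pushforward measures are concentrated), or argue directly in $H(D)\times H(D)$ with the sets $G_\varepsilon$ and their boundaries as the paper does, since in $\R^2$ the boundary $\partial B_\varepsilon$ also contains segments of the coordinate axes, which your disjointness argument by itself does not show to be null sets.
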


More general results of the same type can be found, for example, in \cite{AL2019}.

The aim of this paper is to show an analogous result as of Theorem~\ref{te:MJUM-LM} for rather general classes of zeta-functions.

\section{Statement of new result}

A generalization of the Hurwitz zeta-function $\zeta(s,\alpha)$ was introduced by A.~Laurinčikas \cite{AL2006}.  For the periodic sequence $\gB=\{b_m \in \Co: m\in\N_0\}$ with a minimal period $k \in \N_0$ and a  fixed real  $\alpha$, $0<\alpha\leq 1$, the  periodic Hurwitz zeta-function $\zeta(s,\alpha;\gB)$ is defined by the Dirichlet series
\[
\zeta(s,\alpha;\gB)=\sum^{\infty}_{m=0}\frac{b_m}{(m+\alpha)^s}, \quad \sigma>1.
\]
Since the sequence $\gB$ is periodic sequence, then
$$
\zeta(s,\alpha;\gB)=\frac{1}{k^s}\sum_{m=0}^{k-1}b_m\zeta\bigg(s,\frac{m+\alpha}{k}\bigg).
$$
From this we deduce that the function $\zeta(s,\alpha;\gB)$ has an analytic continuation to the whole complex plane except for a simple pole at the point $s=1$ with residue $b=\frac{1}{k}\sum_{m=0}^{k-1}b_m$.

A class of the Matsumoto zeta-functions is a second class under our interest, particularly, since it covers a wide class of classical zeta-functions having the Euler product representation over primes. It was introduced by K.~Matsumoto in \cite{KM1990}. 
For every $m\in\N$, let $g(m)\in\N$, and, for $j \in \N$ with $1\leq j\leq g(m)$,  $f(j,m)\in\N$. Denote by $p_m$ the $m$th prime number, and $a^{(j)}_m\in\Co$. Assume that
\[\label{eq:MatsDzCondi}
g(m)\leq C_1p_m^{\alpha_0} \quad \text{and} \quad \left\vert a_m^{(j)}\right\vert\leq p_m^{\beta_0}
\]
with a positive constant $C_1$ and non-negative constants $\alpha_0$ and $\beta_0$. Define polynomials
\[
A_m(X)=\prod_{j=1}^{g(m)}\left(1-a_m^{(j)}X^{f(j,m)}\right)
\]
of degree $f(1,m)+\dots+f(g(m),m)$. The function
\begin{equation}\label{eq:MatsDzFun}
	\widetilde{\varphi}(s)=\prod_{m=1}^{\infty}\left(A_m(p_m^{-s})\right)^{-1}
\end{equation}
is called the Matsumoto zeta-function. The product on right-hand-side of the equality~\eqref{eq:MatsDzFun} converges absolutely for $\sigma>\alpha_0+\beta_0+1$. In this region, the function $\widetilde \varphi(s)$ has a Dirichlet series expansion as
\[
\widetilde{\varphi}(s)=\sum_{k=1}^{\infty}\frac{\widetilde{c}_k}{k^{s}}
\]
with the coefficients satisfying an estimate $\widetilde{c}_k=O(k^{\alpha_0+\beta_0+\varepsilon})$ for every  $\varepsilon>0$ if all prime factors of $k$ are large (for the details, see \cite{RKKM2017}). For brevity, we define its shifted version by
\[
\varphi(s):=\widetilde{\varphi}(s+\alpha_0+\beta_0)=\sum_{k=1}^{\infty}\frac{{c}_k}{k^{s}},
\]
where $c_k=\widetilde{c}_kk^{-\alpha_0-\beta_0}$. Then it is easy to see that  $\varphi(s)$ is absolutely convergent for $\sigma>1$.

Also we assume that the function $\varphi(s)$ satisfies the following conditions:
\begin{enumerate}
	\item[(i)]\label{lab:MatsCl1} $\varphi(s)$ can be meromorphically continued to the region $\sigma\geq\sigma_0$, $\frac{1}{2}\leq\sigma_0<1$, and all poles in this region belong to a compact set which has no intersections with the line $\sigma=\sigma_0$;
	\item[(ii)] $\varphi(\sigma+it)=O\left(\left\vert t\right\vert^{C_2}\right)$ for $\sigma\geq\sigma_0$ and  a positive constant $C_2$;
	\item[(iii)] it holds the mean-value estimate
	\[
	\int_0^T\left\vert\varphi(\sigma_0+it)\right\vert^2 d t=O(T).
	\]
\end{enumerate}
 All of functions satisfying above mentioned conditions construct the class of Matsumoto zeta-functions, and we denote the set of all such  $\varphi(s)$ as $\mathcal{M}$.

In 2015, R.~Ka{\v c}inskait{\. e} and K.~Matsumoto proved \cite{RKKM2015} a mixed joint universality theorem for wide class of Matsumoto zeta-functions and for the periodic Hurwitz zeta-function with transcendental parameter.

For the proof of mixed joint universality, the Bagchi method \cite{BB1981} can be used, but, in the case of the whole class $\mathcal{M}$ it is difficult to prove the denseness lemma. Therefore, we use one more restriction class, namely, the Steuding class ${\widetilde S}$ (see \cite{JS2007}).

We say that  $\varphi(s)$ belongs to the class $\widetilde{S}$ if the following assumptions are fulfilled:
\begin{enumerate}
	\item[(a)] $\varphi(s)$ has a Dirichlet series expansion $\varphi(s)=\sum_{m=1}^{\infty}a(m)m^{-s}$ with $a_m=O(m^\varepsilon)$ for every $\varepsilon>0$;
	\item[(b)] there exists $\sigma_{\varphi}<1$ such that $\varphi(s)$ can be meromorphically continued to the region $\sigma>\sigma_{\varphi}$, and is holomorphic there, except for a pole at $s=1$;
	\item[(c)] for any fixed $\sigma>\sigma_{\varphi}$ and any $\varepsilon>0$, there exists a constant $C_3\geq 0$ such that $\varphi(\sigma+it)=O(\left\vert t\right\vert^{C_3+\epsilon})$ ;
	\item[(d)] there exists the Euler product expansion
	\[
	\varphi(s) = \prod_{p\in\Pri}\prod_{j=1}^l\left(1-\frac{a_j(p)}{p^s}\right)^{-1};
	\]
	\item[(e)] there exists a constant $\kappa>0$ such that
	\[
	\lim_{x\to\infty}\frac{1}{\pi(x)}\sum_{p\leq x}\left\vert a(p)\right\vert^2=\kappa,
	\]
	where $\pi(x)$ denotes the number of primes $p$ up to $x$.
\end{enumerate}

Let $\varphi(s)\in {\widetilde{S}}$, and suppose that $\sigma^*$ is an infimum of all $\sigma_1$ for which
$$
\frac{1}{2T}\int_{-T}^{T}|\varphi(\sigma+it)|^2 d t \sim \sum_{m=1}^{\infty}\frac{|a(m)|^2}{m^{2\sigma}}
$$
holds for any $\sigma \geq \sigma_1$. Then $\frac{1}{2}\leq\sigma^*<1$, and we see that ${\widetilde S} \subset {\mathcal{M}}$.

In 2015, the first result on the mixed joint universality for the tuple  $\big(\varphi(s),\zeta(s,\alpha;\gB)\big)$ was proved by R.~Ka{\v c}inskait{\. e} and K.~Matsumoto (see \cite{RKKM2015}). Later it was proved in a more general situation extending collection of periodic Hurwitz zeta-functions (see \cite{RKKM2017}).

\begin{theorem}[{\cite[Theorem 2.2]{RKKM2015}}]\label{te:MixJoinUni}
	Suppose that $\varphi(s)\in\widetilde{S}$, and $\alpha$ is a transcendental number. Let $K_1$ be a compact subset of $D(\sigma^*,1)$, $K_2$ be a compact subset of $D\big(\frac{1}{2},1\big)$ and both with connected complements. Then, for any $f_1(s)\in H^c_0(K_1)$, $f_2(s)\in H^c(K_2)$ and $\varepsilon>0$, it holds that
	\[
	\liminf_{T\to\infty}\frac{1}{T}\meas\big\{\tau\in [0, T]: \sup_{s\in K_1}\left\vert\varphi(s+i\tau)-f_1(s)\right\vert<\varepsilon, \sup_{s\in K_2}\left\vert\zeta(s+i\tau,\alpha;\gB)-f_2(s)\right\vert<\varepsilon\big\}>0.
	\]
\end{theorem}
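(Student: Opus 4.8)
The plan is to prove Theorem~\ref{te:MixJoinUni} by Bagchi's probabilistic method \cite{BB1981}, in the form adapted to mixed tuples (as in \cite{RKKM2015}). Write $D_1 = D(\sigma^*,1)$ and $D_2 = D(\frac12,1)$, and let $\gamma = \{s\in\Co:|s|=1\}$ be the unit circle. Set $\Omega_1 = \prod_{p\in\Pri}\gamma_p$ and $\Omega_2 = \prod_{m\in\N_0}\gamma_m$ (each $\gamma_p$, $\gamma_m$ a copy of $\gamma$), and $\Omega = \Omega_1\times\Omega_2$; being a compact abelian group, $\Omega$ carries a unique probability Haar measure $m_H$, equal to the product of the Haar measures on $\Omega_1$ and $\Omega_2$. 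For $\omega_1\in\Omega_1$ write $\omega_1(p)$ for its $p$-th coordinate, extended completely multiplicatively to $\omega_1(m)$, $m\in\N$, and for $\omega_2\in\Omega_2$ write $\omega_2(m)$ for its $m$-th coordinate. On $H(D_1)\times H(D_2)$ introduce the random element
\[
\Phi(s,\omega) = \bigl(\varphi(s,\omega_1),\ \zeta(s,\alpha,\omega_2;\gB)\bigr),\qquad \omega = (\omega_1,\omega_2)\in\Omega,
\]
where $\varphi(s,\omega_1) = \prod_{p\in\Pri}\prod_{j=1}^{l}\bigl(1 - a_j(p)\omega_1(p)p^{-s}\bigr)^{-1} = \sum_{m=1}^{\infty}a(m)\omega_1(m)m^{-s}$ (for $\sigma$ in the region of absolute convergence) and $\zeta(s,\alpha,\omega_2;\gB) = \sum_{m=0}^{\infty}b_m\omega_2(m)(m+\alpha)^{-s}$; let $P_\Phi$ denote the distribution of $\Phi$ on $\bigl(H(D_1)\times H(D_2),\ \Bor{H(D_1)\times H(D_2)}\bigr)$.

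The first main step is a joint limit theorem: the measures
\[
P_T(A) = \frac1T\meas\Bigl\{\tau\in[0,T] : \bigl(\varphi(s + i\tau),\ \zeta(s + i\tau,\alpha;\gB)\bigr)\in A\Bigr\},\qquad A\in\Bor{H(D_1)\times H(D_2)},
\]
converge weakly to $P_\Phi$ as $T\to\infty$. I would proceed in the standard stages. First, prove the limit theorem on the torus $\Omega$ for the curve $\omega_\tau := \bigl((p^{-i\tau})_{p\in\Pri},\,((m+\alpha)^{-i\tau})_{m\in\N_0}\bigr)$: since $\alpha$ is transcendental, the system $L(\alpha,\Pri)$ is linearly independent over $\Q$, so $\{\omega_\tau:\tau\in\R\}$ is a dense one-parameter subgroup of $\Omega$ and the induced translation flow is ergodic with respect to $m_H$ — this is exactly where transcendence (rather than mere irrationality) of $\alpha$ is needed, and it also forces $\omega_1$ and $\omega_2$ to be independent in the limit. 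Next, pass to the smoothed, absolutely convergent series $\varphi_n(s) = \sum_{m=1}^{\infty}a(m)v_n(m)m^{-s}$ and $\zeta_n(s,\alpha;\gB) = \sum_{m=0}^{\infty}b_m v_n(m)(m+\alpha)^{-s}$ with $v_n(m) = \exp\{-(m/n)^{\theta}\}$, $\theta>0$, and transfer the torus limit theorem to a limit theorem in $H(D_1)\times H(D_2)$ for $(\varphi_n,\zeta_n)$ and its $\omega$-analogue. Then remove the truncation by proving
\[
\lim_{n\to\infty}\limsup_{T\to\infty}\frac1T\int_0^T\Bigl(\rho_1\bigl(\varphi(s + i\tau),\varphi_n(s + i\tau)\bigr) + \rho_2\bigl(\zeta(s + i\tau,\alpha;\gB),\zeta_n(s + i\tau,\alpha;\gB)\bigr)\Bigr)d\tau = 0,
\]
where $\rho_1,\rho_2$ are metrics inducing the topologies of $H(D_1)$, $H(D_2)$; here one uses the polynomial growth bound (c) and the mean-square estimate (iii) for $\varphi$ (whose only pole, at $s=1$, lies outside $D_1$, with (i) keeping any further poles off $\sigma = \sigma_0$) together with the classical mean-square bound for $\zeta(s,\alpha;\gB)$ on $D_2$, plus a contour-shift and Cauchy-estimate argument. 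Finally, tightness of $\{P_T\}$ together with the continuity theorem identifies the weak limit as $P_\Phi$.

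The second main step is to compute $\operatorname{supp}P_\Phi$. Because $(\omega_1,\omega_2)$ is distributed as a product measure and the two components of $\Phi$ depend on $\omega_1$ and $\omega_2$ separately, $\operatorname{supp}P_\Phi$ is the Cartesian product of the supports of the two marginal laws. For the periodic Hurwitz component, $\{\log(m+\alpha):m\in\N_0\}$ is linearly independent over $\Q$ and $\zeta(s,\alpha;\gB)$ carries no Euler product, so the usual argument (as for $\zeta(s,\alpha)$ itself) shows the support of the law of $\zeta(s,\alpha,\omega_2;\gB)$ to be all of $H(D_2)$. For the $\varphi$-component one invokes the hypothesis $\varphi\in\widetilde S$, namely the Euler product (d) and the density condition (e), and runs the Steuding--Laurin{\v c}ikas--Matsumoto denseness argument — condition (e) being what guarantees that the logarithms of finite sub-products of $\varphi(s,\omega_1)$ are dense in a suitable subspace of $H(D_1)$ — to obtain that the support of the law of $\varphi(s,\omega_1)$ equals
\[
S_\varphi := \bigl\{g\in H(D_1) : g(s)\neq 0\ \text{for all}\ s\in D_1,\ \text{or}\ g\equiv 0\bigr\}.
\]
Hence $\operatorname{supp}P_\Phi = S_\varphi\times H(D_2)$. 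This denseness lemma for the class $\widetilde S$ is, I expect, the main obstacle: it is precisely the point where the full Matsumoto class $\mathcal M$ cannot be treated and the restriction to the Steuding subclass $\widetilde S$ becomes necessary.

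The theorem then follows from the support description by the usual Mergelyan argument. Given $f_1\in H^c_0(K_1)$, by Mergelyan's theorem and its non-vanishing on $K_1$ there is a polynomial $p_1$ with $\sup_{s\in K_1}\bigl|f_1(s) - e^{p_1(s)}\bigr| < \varepsilon/2$ and $e^{p_1}\in S_\varphi$; given $f_2\in H^c(K_2)$, Mergelyan's theorem yields a polynomial $p_2$ with $\sup_{s\in K_2}\bigl|f_2(s) - p_2(s)\bigr| < \varepsilon/2$ and $p_2\in H(D_2)$. Then $(e^{p_1},p_2)\in\operatorname{supp}P_\Phi$, so the open set
\[
G = \Bigl\{(h_1,h_2)\in H(D_1)\times H(D_2) : \sup_{s\in K_1}\bigl|h_1(s) - e^{p_1(s)}\bigr| < \tfrac\varepsilon2,\ \sup_{s\in K_2}\bigl|h_2(s) - p_2(s)\bigr| < \tfrac\varepsilon2\Bigr\}
\]
satisfies $P_\Phi(G) > 0$. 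Since $G\subset\widehat G := \{(h_1,h_2) : \sup_{s\in K_1}|h_1(s) - f_1(s)| < \varepsilon,\ \sup_{s\in K_2}|h_2(s) - f_2(s)| < \varepsilon\}$ and $\widehat G$ is open, the weak convergence $P_T\Rightarrow P_\Phi$ and the Portmanteau theorem give $\liminf_{T\to\infty}P_T(\widehat G) \ge P_\Phi(G) > 0$, which is the assertion. I note that this yields only a $\liminf$; strengthening it to a genuine limit for all but at most countably many $\varepsilon$ — the contribution announced in the abstract — reduces to showing $P_\Phi(\partial\widehat G) = 0$ for all but at most countably many $\varepsilon$, which follows from the monotonicity of $\varepsilon\mapsto P_\Phi(\widehat G)$.
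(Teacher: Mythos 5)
Your proposal is correct and follows essentially the same route the paper (via its source \cite{RKKM2015}) uses: Bagchi's probabilistic method with the joint limit theorem on $H(D_1)\times H(D_2)$ (Theorem~\ref{te:JoinLim}), the identification of the support $S_\varphi\times H(D_2)$ via the positive density method requiring $\varphi\in\widetilde S$ (Theorem~\ref{lem:Supp}), and then Mergelyan plus the Portmanteau inequality for the open set $\widehat G$. The only cosmetic difference is that the paper states the limit theorem and support on bounded rectangles $D_M$, $D_N$ rather than the full strips, which does not affect the argument.
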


The aim of the present paper is to prove the modification of Theorem~\ref{te:MixJoinUni} in terms of density and  to give a further certain generalizations.

Now we state the main result of the present paper.

\begin{theorem}\label{te:MixJoinUniMod}
	Suppose that $\varphi(s) \in {\widetilde S}$ and $\alpha$ is a transcendental number. Let $K_1$, $K_2$, $f_1(s)$, $f_2(s)$ be
	as in Theorem~\ref{te:MixJoinUni}. Then, for all but at most countably many $\varepsilon>0$, it holds that
	\[
	\lim_{T\to\infty}\frac{1}{T}\meas\big\{\tau\in [0, T]:\sup_{s\in K_1}\left\vert\varphi(s+i\tau)-f_1(s)\right\vert<\varepsilon, \sup_{s\in K_2}\left\vert\zeta(s+i\tau,\alpha;\gB)-f_2(s)\right\vert<\varepsilon\big\}>0.
	\]
\end{theorem}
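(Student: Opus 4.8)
The plan is to upgrade the lower-density bound of Theorem~\ref{te:MixJoinUni} to an existence-of-limit statement by the measure-theoretic device used in \cite{JLM2013, ALLM2014} for single zeta-functions and in \cite{ALLM2016} for the classical mixed pair. Write $D_1=D(\sigma^*,1)$, $D_2=D(\frac12,1)$ and consider, on the separable metric space $H(D_1)\times H(D_2)$ endowed with the topology of uniform convergence on compact subsets, the probability measures
\[
P_T(A)=\frac1T\meas\Big\{\tau\in[0,T]\colon \big(\varphi(s+i\tau),\,\zeta(s+i\tau,\alpha;\gB)\big)\in A\Big\},\qquad A\in\Bor{H(D_1)\times H(D_2)}.
\]
The first step is to quote the probabilistic limit theorem that is already established, as the engine of Theorem~\ref{te:MixJoinUni}, in \cite{RKKM2015} (see also \cite{RKKM2017}): for $\varphi\in\widetilde S$ and $\alpha$ transcendental, $P_T$ converges weakly, as $T\to\infty$, to a probability measure $P$ on $H(D_1)\times H(D_2)$ whose support is exactly $S_\varphi\times H(D_2)$, where $S_\varphi=\{g\in H(D_1)\colon g(s)\neq0 \text{ on } D_1,\text{ or } g\equiv0\}$. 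The transcendence of $\alpha$ is what guarantees that the two coordinates are governed by independent random elements and that the Hurwitz coordinate has full support $H(D_2)$.

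With this in hand, I would fix $f_1\in H^c_0(K_1)$, $f_2\in H^c(K_2)$, introduce the continuous functional
\[
\rho(g_1,g_2)=\max\Big\{\sup_{s\in K_1}|g_1(s)-f_1(s)|,\ \sup_{s\in K_2}|g_2(s)-f_2(s)|\Big\}
\]
on $H(D_1)\times H(D_2)$, and set $G_\varepsilon=\{\rho<\varepsilon\}$, which is open and satisfies $\partial G_\varepsilon\subseteq\{\rho=\varepsilon\}$. The quantity appearing in Theorem~\ref{te:MixJoinUniMod} is precisely $P_T(G_\varepsilon)$. Since the level sets $\{\rho=\varepsilon\}$, $\varepsilon>0$, are pairwise disjoint and $P$ is a finite measure, $P(\{\rho=\varepsilon\})>0$ can hold for at most countably many $\varepsilon$; for every other $\varepsilon>0$ the set $G_\varepsilon$ is a $P$-continuity set, so the portmanteau theorem yields $\lim_{T\to\infty}P_T(G_\varepsilon)=P(G_\varepsilon)$.

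Finally I would check that $P(G_\varepsilon)>0$, which by the support description amounts to exhibiting one point of $S_\varphi\times H(D_2)$ inside $G_\varepsilon$: by Mergelyan's theorem, approximate a branch of $\log f_1$ that is continuous on $K_1$ and holomorphic in its interior by a polynomial $q_1$, so that $g_1=\exp(q_1)\in S_\varphi$ lies within $\varepsilon$ of $f_1$ on $K_1$, and approximate $f_2$ on $K_2$ by a polynomial $g_2\in H(D_2)$; then $(g_1,g_2)\in G_\varepsilon\cap\mathrm{supp}\,P$, and every neighbourhood of a support point has positive $P$-measure. Combining the last two steps proves the theorem for all but at most countably many $\varepsilon>0$. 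The only step with genuine content is the first one — the weak convergence of $P_T$ together with the exact identification of $\mathrm{supp}\,P$ — but this is essentially contained in the Bagchi-type argument of \cite{RKKM2015}, so here it is a matter of isolating and restating it; the disjoint-level-sets observation, the portmanteau passage, and the Mergelyan/support positivity argument are all soft and run exactly as in \cite{ALLM2014, ALLM2016}.
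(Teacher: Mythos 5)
Your proposal is correct and follows essentially the same route as the paper's proof: the weak-convergence limit theorem and support identification from \cite{RKKM2015} (Theorems~\ref{te:JoinLim} and \ref{lem:Supp}), the pairwise-disjoint level sets $\{\rho=\varepsilon\}$ giving $P$-continuity of $G_\varepsilon$ for all but countably many $\varepsilon$, the portmanteau theorem, and Mergelyan's theorem to place a support point $\big(\exp(q_1),q_2\big)$ inside $G_\varepsilon$. The only cosmetic difference is that the paper works on bounded strips $D_M\times D_N$ containing $K_1,K_2$ and inserts an auxiliary $\varepsilon/2$-neighbourhood $G\subset G_\varepsilon$ of $\big(\exp(p_1),p_2\big)$, whereas you argue directly that a support point lies in the open set $G_\varepsilon$; both are equivalent.
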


\begin{remark}
The transcendence of  $\alpha$ can be replaced by the assumption that the elements of the set $L(\alpha,\Pri)$ are linearly independent over $\Q$ as it is done in Theorem~\ref{te:MJUM-LM}.
\end{remark}

\section{Two probabilistic results}

For the proof of Theorem~\ref{te:MixJoinUniMod}, the probabilistic approach is used. In this section, we present joint mixed limit theorems on weakly convergent probability measures in the space of analytic functions and a proposition on the support of probability measure.

Since now, we are interested in the proof of a  joint limit theorem for the tuple $\big(\varphi(s), $ $\zeta(s,\alpha;\gB)\big)$, we deal with more specified regions than $D(\sigma^*,1)$ and $D(\frac{1}{2},1)$ (for the arguments, we refer to \cite{RKKM2015} or \cite{RKKM2017-Pal}).
As it is know,  the function $\varphi(s)$ has finitely many poles by condition (i) (denote them by $s_1(\varphi), \dots, s_l(\varphi)$), then put
\[
D_{\varphi}:=\{s \in \Co: \sigma>\sigma_0, \ \sigma \ne \Re s_j(\varphi), \  1\leq j\leq l\}.
\]
Since the function $\zeta(s,\alpha;\gB)$ can be written as a linear combination of the Hurwitz zeta-functions $\zeta(s,\alpha)$, it is entire or has at most a simple pole at $s=1$. Let
\[
D_{\zeta}:=
\begin{cases}
	\{s \in \Co: \sigma>\frac{1}{2}\}, & \text{if} \ \zeta(s,\alpha;\gB) \ \text{is entire},\\
	\{s \in \Co: \sigma>\frac{1}{2}, \ \sigma \ne 1\}, & \text{if} \ s=1 \ \text{is a pole of} \ \zeta(s,\alpha;\gB),
\end{cases}
\]
and $D_1$ and $D_2$ be two open regions of $D_\varphi$ and $D_\zeta$, respectively. By $H^2(D)$ we mean the Cartesian product of the spaces $H(D_1)$ and $H(D_2)$. Let $T>0$, and for $A \in \Bor{H^2(D)}$, define
\[
P_T(A)=\frac{1}{T}\meas\big\{\tau\in [0, T]: Z(\ul{s}+i\tau)\in A\big\}
\]
with $\ul{s}+i\tau=(s_1+i\tau,s_2+i\tau)$,  $s_1\in D_1$, $s_2\in D_2$, and
$$
Z(\ul{s})=\big(\varphi(s_1),\zeta(s_2,\alpha; \gB)\big).
$$

For the definition of limit measure, we need a certain probability space. Let ${\widehat \gamma}=\{s \in \Co : |s|=1 \}$. Define two tori
\[
\Omega_1=\prod_{p\in\Pri}\gamma_p \quad \text{and} \quad \Omega_2=\prod_{m=0}^{\infty}\gamma_m,
\]
where $\gamma_p={\widehat \gamma}$ for all $p \in \Pri$ and $\gamma_m={\widehat \gamma}$ for all $m\in\N_0$, respectively.
With the product topology and pointwise multiplication, both tori $\Omega_1$ and $\Omega_2$ become compact topological Abelian groups. Therefore, on $(\Omega_1,\Bor{\Omega_1})$ and $(\Omega_2,\Bor{\Omega_2})$, there exist the probability Haar measures $m_{1H}$ and $m_{2H}$ respectively. Thus, we get the probability spaces $(\Omega_1,\Bor{\Omega_1},m_{1H})$ and $(\Omega_2,\Bor{\Omega_2},m_{2H})$.  Denote by $\omega_1(p)$ the projection of $\omega_1\in\Omega_1$ to the coordinate space $\gamma_p$, $p \in \Pri$, while, for $m \in \N_0$, let $\omega_1(m):=\omega_1^{\alpha_{p_1}}(p_1) \cdots \omega_1^{\alpha_{p_r}}(p_r)$ according to the factorization of $m$ into prime divisors $m=p_1^{\alpha_1}\cdots p_r^{\alpha_r}$, and $\omega_2(m)$, $m \in \N_0$, the projection to the coordinate space $\gamma_m$.

Now let $\Omega=\Omega_1\times\Omega_2$, and denote the elements of $\Omega$ by $\omega=(\omega_1,\omega_2)$. Since $\Omega$ is a compact topological Abelian group, we can define the probability Haar measure $m_H:=m_{1H}\times m_{2H}$ on $(\Omega,\Bor{\Omega})$. This leads to a probability space $(\Omega, \Bor{\Omega},m_H)$.

On $(\Omega,\Bor{\Omega},m_{H})$, define $H^2(D)$-valued random element $Z(\ul{s}, \omega)$ by the formula
\[
Z(\ul{s}, \omega)=\big(\varphi(s_1,\omega_1),\zeta(s_2,\alpha,\omega_2;\gB)\big).
\]
Here $\ul{s}=(s_1,s_2)\in D_1\times D_2$,
\begin{align*}
\varphi(s_1,\omega_1)=\sum_{m=1}^{\infty}\frac{c_m\omega_1(m)}{m^{s_1}} \quad \text{and} \quad
\zeta(s_2,\alpha,\omega_2;\gB)=\sum_{m=0}^{\infty}\frac{b_m\omega_2(m)}{(m+\alpha)^{s_2}}
\end{align*}
are $H(D_1)$-valued and $H(D_2)$-valued random elements defined on $(\Omega_1,\Bor{\Omega_1},m_{1H})$ and $(\Omega_2,$ $\Bor{\Omega_2},m_{2H})$, respectively. Denote by $P_Z$ the distribution of the random element $Z(\ul{s},\omega)$, i.e.,
$$
P_Z(A)=m_H\big\{\omega\in\Omega: Z(\ul{s},\omega)\in A\big \}, \quad A \in \Bor{H^2(D)}.
$$

Now we are in position to state a mixed joint limit theorem for the tuple of the class of zeta-functions.

\begin{theorem}\label{te:JoinLim}
	Suppose that $\varphi(s) \in {\mathcal{M}}$, and $\alpha$ is a transcendental number. Then the measure $P_T(A)$ converges weakly to $P_{Z}(A)$ as $T\to\infty$.
\end{theorem}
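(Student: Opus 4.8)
The plan is to follow the classical Bagchi-type scheme in the mixed setting, as in \cite{RKKM2015} or \cite{RKKM2017-Pal}; since neither the limit measure $P_Z$ nor the asserted weak convergence involves the Euler product, it is enough to argue for $\varphi(s)\in\mathcal{M}$. \emph{Step~1 (a limit theorem on the torus).} For $T>0$ put
\[
Q_T(A)=\frac1T\meas\Big\{\tau\in[0,T]:\big((p^{-i\tau}:p\in\Pri),\,((m+\alpha)^{-i\tau}:m\in\N_0)\big)\in A\Big\},\quad A\in\Bor{\Omega}.
\]
Computing the Fourier transform of $Q_T$ on the compact abelian group $\Omega$ and applying the Weyl equidistribution criterion, one obtains $Q_T\Rightarrow m_H$ as $T\to\infty$; the only nontrivial input is the linear independence over $\Q$ of the elements of $L(\alpha,\Pri)$, which follows from the transcendence of $\alpha$ (and is exactly the hypothesis proposed in the Remark).

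\emph{Step~2 (absolutely convergent approximations).} Fix $\theta_1,\theta_2>0$, set $v_n(m)=\exp\{-(m/n)^{\theta_1}\}$ and $u_n(m)=\exp\{-((m+\alpha)/n)^{\theta_2}\}$, and define
\[
\varphi_n(s)=\sum_{m=1}^{\infty}\frac{c_m v_n(m)}{m^s},\qquad
\zeta_n(s,\alpha;\gB)=\sum_{m=0}^{\infty}\frac{b_m u_n(m)}{(m+\alpha)^s},
\]
together with the randomized versions $\varphi_n(s_1,\omega_1)$, $\zeta_n(s_2,\alpha,\omega_2;\gB)$ obtained by inserting the factors $\omega_1(m)$, $\omega_2(m)$. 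All these series converge absolutely and uniformly on compact subsets, uniformly in $\omega$, so $\omega\mapsto Z_n(\ul s,\omega):=\big(\varphi_n(s_1,\omega_1),\zeta_n(s_2,\alpha,\omega_2;\gB)\big)$ is a continuous map $\Omega\to H^2(D)$. Since $Z_n(\ul s+i\tau)=Z_n(\ul s,\Phi(\tau))$ for $\Phi(\tau)=\big((p^{-i\tau}),((m+\alpha)^{-i\tau})\big)$, Step~1 and the continuous mapping theorem show that the distribution $P_{T,n}$ of $Z_n(\ul s+i\tau)$, with $\tau$ uniform on $[0,T]$, converges weakly as $T\to\infty$ to $P_{Z,n}:=m_H\circ Z_n(\ul s,\cdot)^{-1}$.

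\emph{Step~3 (mean approximation).} Let $\rho$ metrize $H^2(D)$. The crux is the pair of estimates
\[
\lim_{n\to\infty}\limsup_{T\to\infty}\frac1T\int_0^T\rho\big(Z(\ul s+i\tau),Z_n(\ul s+i\tau)\big)\,d\tau=0
\quad\text{and}\quad
\lim_{n\to\infty}\int_\Omega\rho\big(Z(\ul s,\omega),Z_n(\ul s,\omega)\big)\,dm_H=0.
\]
For the $\zeta$-coordinate these follow from the usual mean-value bounds for the periodic Hurwitz zeta-function and the almost-sure convergence of $\zeta(s_2,\alpha,\omega_2;\gB)$ in $H(D_2)$. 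For the $\varphi$-coordinate one first upgrades condition~(iii) — the second moment on the single line $\sigma=\sigma_0$ — to $\int_0^T|\varphi(\sigma+it)|^2\,dt=O(T)$ uniformly for $\sigma$ in any compact subset of $(\sigma_0,\infty)$ disjoint from $\{\Re s_j(\varphi):1\le j\le l\}$, by combining (iii) with the polynomial growth bound (ii) through a Cauchy-integral/convexity argument; then one represents $\varphi-\varphi_n$ by a Mellin-type contour integral and shifts the contour into $D_1$, and, for the random element, derives the almost-sure convergence of $\varphi(s_1,\omega_1)=\sum_m c_m\omega_1(m)m^{-s_1}$ in $H(D_1)$ from the same bound and the orthonormality of $\{\omega_1(m)\}$ in $L^2(\Omega_1,m_{1H})$.

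\emph{Step~4 (conclusion).} The second estimate of Step~3 makes the family $\{P_{Z,n}\}$ tight and forces every weak limit point to equal $P_Z$, so $P_{Z,n}\Rightarrow P_Z$ as $n\to\infty$. Combining $P_{T,n}\Rightarrow P_{Z,n}$ ($T\to\infty$), $P_{Z,n}\Rightarrow P_Z$ ($n\to\infty$), and — by Chebyshev's inequality applied to the first estimate of Step~3 — $\lim_{n\to\infty}\limsup_{T\to\infty}\frac1T\meas\{\tau\in[0,T]:\rho(Z(\ul s+i\tau),Z_n(\ul s+i\tau))\ge\delta\}=0$ for every $\delta>0$, the standard approximation lemma for weak convergence in a separable metric space gives $P_T\Rightarrow P_Z$. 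I expect the main obstacle to lie in Step~3 for the $\varphi$-coordinate: as $\varphi$ is only assumed to belong to $\mathcal{M}$, a mean-square bound on vertical lines throughout $D_1$ is not given a priori (only on $\sigma=\sigma_0$), so it must be propagated into the strip and, for the random series, turned into an almost-sure statement, all the while staying clear of the finitely many poles — which is precisely the role of condition~(i) and of the excision built into the definition of $D_\varphi$.
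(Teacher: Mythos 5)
Your outline is correct in substance, but note that the paper proves Theorem~\ref{te:JoinLim} only by citation: its ``proof'' is a pointer to Theorem~2.1 of \cite{RKKM2015}, whose argument is exactly the Bagchi-type scheme you reconstruct --- the torus limit theorem via Fourier transforms and the linear independence over $\Q$ of $L(\alpha,\Pri)$ (this is where transcendence of $\alpha$ enters, as the paper remarks), the weighted absolutely convergent series $\varphi_n$, $\zeta_n$ with the continuous mapping theorem, and the mean-square approximation on vertical lines, where conditions (ii)--(iii) are upgraded to $\int_0^T|\varphi(\sigma+it)|^2\,dt=O(T)$ throughout $\sigma>\sigma_0$ away from the poles and a Carlson-type theorem gives $\sum_m|c_m|^2m^{-2\sigma}<\infty$ there. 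The one place you genuinely deviate is the identification of the limit measure: the cited proof (following Bagchi--Laurin\v{c}ikas) first shows that $P_T$ and the analogous measures built from $Z(\ul{s}+i\tau,\omega)$ converge to a common limit and then identifies that limit with $P_Z$ by the ergodicity of the shift flow $\{a_\tau\}$ on $\Omega$ together with Birkhoff's theorem, whereas you bypass ergodicity by proving the $L^1(m_H)$ estimate $\int_\Omega\rho(Z,Z_n)\,dm_H\to 0$, deducing $P_{Z,n}\Rightarrow P_Z$ directly, and applying Billingsley's approximation theorem (Theorem~4.2 in \cite{PB1968}) twice. This variant is legitimate and arguably cleaner at the final step, but it buys nothing for free: the $L^1(m_H)$ estimate rests on the same ingredients (orthogonality of $\{\omega_1(m)\}$, the Carlson-type coefficient bound, and the a.s.\ uniform convergence of the random series $\varphi(s_1,\omega_1)$ and $\zeta(s_2,\alpha,\omega_2;\gB)$ on compact subsets of $D_1$, $D_2$) that the classical route must develop anyway in order to define the random element $Z(\ul{s},\omega)$; so the real work you flag in Step~3 for the $\varphi$-coordinate is indeed where the cited proof spends its effort as well.
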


\begin{proof}
	The proof of this theorem is given in \cite[Section 3, Theorem~2.1]{RKKM2015}. We only note that the transcendence of $\alpha$ plays an essential role in the proof.
\end{proof}

The second probabilistic result used in the proof of Theorem~\ref{te:MixJoinUniMod} is that we need to construct an explicit form of the support of the measure $P_Z$. For an obtaining of the mentioned result, we use the positive density method. Therefore, it is necessary to assume that the function $\varphi(s)$ belongs to the Steuding class ${\widetilde S}$, particularly, the condition (e) must to be satisfied (for the details, see \cite[Section~4, Remark~4.4]{RKKM2015}).

Let $\varphi(s)$, $K_1$, $K_2$, $f_1(s)$ and $f_2(s)$ be as in Theorem~\ref{te:MixJoinUni}. Then there exists a real number $\sigma_0$, $\sigma^*<\sigma_0<1$ and a sufficiently large positive number $M$ such that $K_1$ belongs to
$$
D_M=\{s \in \Co: \sigma_0<\sigma<1, \ |t|<M\}.
$$
Since $\varphi(s) \in {\widetilde S}$, it has only one pole at $s=1$, then put $D_\varphi=\{s \in \Co: \sigma>\sigma_0, \ \sigma \not = 1\}$. Therefore, $D_M \subset D_\varphi$. Analogously  we can find a sufficiently large positive number $N$ such that $K_2$ belongs to
$$
D_N=\bigg\{s \in \Co: \frac{1}{2}<\sigma<1, \ |t|<N \bigg \}.
$$

Now, if in Theorem~\ref{te:JoinLim} take $D_1=D_M$ and $D_2=D_N$, we get an explicit form of the $P_Z$'s support.

\begin{theorem}\label{lem:Supp}
	The support of the measure $P_{Z}$ is the set $S:=S_{\varphi}\times H(D_N)$, where
$S_{\varphi}:=\{f_1(s)\in H(D_M): f_1(s)\ne 0 \ \text{for all} \ s \in D_M, \ \text{or} \ f_1(s)\equiv 0\}$.
\end{theorem}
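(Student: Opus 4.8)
The plan is to reduce the statement to two one-dimensional support computations by exploiting the product structure of $(\Omega,\Bor{\Omega},m_H)$. Since the first coordinate of $Z(\ul{s},\omega)=\big(\varphi(s_1,\omega_1),\zeta(s_2,\alpha,\omega_2;\gB)\big)$ depends only on $\omega_1\in\Omega_1$, the second only on $\omega_2\in\Omega_2$, and $m_H=m_{1H}\times m_{2H}$, the distribution $P_Z$ is the product of the distribution $P_{Z,1}$ of $\varphi(\cdot,\omega_1)$ on $H(D_M)$ and the distribution $P_{Z,2}$ of $\zeta(\cdot,\alpha,\omega_2;\gB)$ on $H(D_N)$. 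Both $H(D_M)$ and $H(D_N)$ are separable, so the support of a product measure is the product of the supports, and it suffices to show $\operatorname{supp}P_{Z,2}=H(D_N)$ and $\operatorname{supp}P_{Z,1}=S_\varphi$.

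For the periodic Hurwitz component, the summands $b_m\omega_2(m)(m+\alpha)^{-s}$, $m\in\N_0$, are independent $H(D_N)$-valued random elements on $(\Omega_2,\Bor{\Omega_2},m_{2H})$, since the coordinates $\omega_2(m)$ are independent under the Haar measure, and the series $\sum_m b_m\omega_2(m)(m+\alpha)^{-s}$ converges in $H(D_N)$ for $m_{2H}$-almost every $\omega_2$ (established in the proof of Theorem~\ref{te:JoinLim}). By the classical description of the support of an almost surely convergent sum of independent random elements in a separable space, $\operatorname{supp}P_{Z,2}$ is the closure of the set of all sums $\sum_m a_m b_m(m+\alpha)^{-s}$ over sequences $(a_m)$ with $|a_m|=1$ for which the series converges. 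Using the transcendence of $\alpha$, so that $\{\log(m+\alpha):m\in\N_0\}$ is linearly independent over $\Q$, together with Mergelyan's theorem and a rearrangement theorem of Pecherskii type for conditionally convergent series in a Hilbert space of analytic functions, one shows this set is all of $H(D_N)$; no nonvanishing restriction appears because $\zeta(s,\alpha;\gB)$ has no Euler product. This is the support computation for the periodic Hurwitz zeta-function and may be taken from the literature.

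For the $\varphi$-component one must use the stronger hypothesis $\varphi\in\widetilde S$, not merely $\varphi\in\mathcal M$. By the Euler product (d), for $m_{1H}$-almost all $\omega_1$ the random element $\varphi(\cdot,\omega_1)$ is nonvanishing on $D_M$ (recall $\sigma^*<\sigma_0$), so one may pass to $\log\varphi(s_1,\omega_1)=\sum_{p\in\Pri}u_p(s_1,\omega_1(p))$, a sum of independent $H(D_M)$-valued random elements; by the same support theorem the support of the distribution of $\log\varphi(\cdot,\omega_1)$ is the closure of the set of sums $\sum_p u_p(s_1,x_p)$ over $|x_p|=1$. The principal part of $u_p$ is $a(p)\omega_1(p)p^{-s_1}$, and condition (e), $\pi(x)^{-1}\sum_{p\le x}|a(p)|^2\to\kappa>0$, forces $a(p)\ne 0$ for a set of primes of positive lower density, which is the input of the positive density method. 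Combining a finite Euler product truncation, whose tail is controlled in mean square through conditions (ii), (iii) and Cauchy's integral formula, with the rearrangement theorem in a Hilbert space of analytic functions, one obtains that $\big\{\sum_p a(p)x_p p^{-s_1}:|x_p|=1\big\}$ is dense in $H(D_M)$, so the distribution of $\log\varphi(\cdot,\omega_1)$ has full support $H(D_M)$. Since $D_M$ is simply connected and the map $\exp$ on $H(D_M)$ is continuous, $\operatorname{supp}P_{Z,1}=\overline{\exp(H(D_M))}$, which by Hurwitz's theorem is precisely the set of nonvanishing holomorphic functions on $D_M$ together with the zero function, i.e.\ $S_\varphi$. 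With the previous paragraph and the product structure this gives $\operatorname{supp}P_Z=S_\varphi\times H(D_N)=S$.

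The main obstacle is the denseness assertion for the $\varphi$-part, namely that the shifts of the logarithmic Euler factors fill $H(D_M)$ throughout the strip $\sigma_0<\sigma<1$, where the defining Dirichlet series converges only in mean; one must truncate, estimate the remainder in mean square, and then apply the rearrangement theorem in a suitable Hilbert space of square-integrable analytic functions, and it is here that condition (e) of the Steuding class $\widetilde S$ is indispensable. The periodic Hurwitz part uses the same rearrangement machinery but is otherwise routine once the transcendence of $\alpha$ has been invoked.
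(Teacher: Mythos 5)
Your proposal is correct and follows essentially the same route as the paper, which proves Theorem~\ref{lem:Supp} simply by citing \cite[Lemma~4.3]{RKKM2015}: there, too, the support is computed componentwise via the product structure of $(\Omega,\Bor{\Omega},m_H)$, the $\varphi$-component is treated by the positive density method resting on condition (e) of the class $\widetilde{S}$ (logarithm of the random Euler product, a denseness lemma, then $\exp$ and Hurwitz's theorem giving $S_\varphi$), and the periodic Hurwitz component has full support $H(D_N)$ by the standard argument using the transcendence of $\alpha$. The technical points you defer (the a.e.\ identification of the random Dirichlet series with the random Euler product, and the exact way condition (e) feeds the rearrangement/denseness step) are likewise delegated to the cited literature by the paper itself, so no gap arises relative to its treatment.
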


\begin{proof}
	The proof of the theorem can be found in \cite[Lemma 4.3]{RKKM2015}.
\end{proof}

\section{Proof of Theorem~\ref{te:MixJoinUniMod}}

First we remind two propositions used in the proof of the main result of the paper.

Recall  that a set $A\in\Bor{S}$ is said to be a continuity set of the probability  measure $P$ if $P(\partial A)=0$, where $\partial A$ is the boundary of $A$. Note that the set $\partial A$ is closed, therefore, it belongs to the class $\Bor{S}$. We are interested on the property of probability measures defined in terms of continuity sets, which is
equivalent to weak convergence. Therefore we use a following fact.

\begin{theorem}\label{te:ContSetEquiv}
	Let $P_n$ and $P$ be probability measures on $\left(S,\Bor{S}\right)$. Then the following assertions are equivalent:
	\begin{enumerate}
		\item[1)] $P_n$ converges weakly to $P$ as $n \to \infty$,
		\item[2)] $\lim\limits_{n\to\infty}P_n(A)=P(A)$ for all continuity sets $A$ of $P$.
	\end{enumerate}
\end{theorem}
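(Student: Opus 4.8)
The statement is the classical portmanteau equivalence, so the plan is to reduce it to the standard reformulations in terms of open and closed sets and then pass to continuity sets. Throughout I use that the underlying space is metrizable, which is the relevant setting here: the space $H(D)$ of holomorphic functions equipped with the topology of uniform convergence on compact subsets is a complete separable metric space, and the finite product $H^2(D)$ is again one. I would fix once and for all a metric $\rho$ inducing the topology of $S$, and treat the two implications $1)\Rightarrow 2)$ and $2)\Rightarrow 1)$ separately; the first is routine, while the second carries the only real subtlety.

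For $1)\Rightarrow 2)$, I would first show that weak convergence forces $\limsup_{n\to\infty} P_n(F) \le P(F)$ for every closed set $F \subset S$. The device is to approximate the indicator of $F$ from above by bounded continuous functions: for $k \in \N$ set $g_k(x) = \max\{1 - k\rho(x,F), 0\}$, which is continuous, satisfies $\mathbf{1}_F \le g_k \le 1$, and decreases pointwise to $\mathbf{1}_F$ as $k \to \infty$ since $F$ is closed. Then $\limsup_n P_n(F) \le \lim_n \int g_k\,dP_n = \int g_k\,dP$, and letting $k\to\infty$ with monotone convergence yields the inequality. Taking complements gives $\liminf_n P_n(G) \ge P(G)$ for every open $G$. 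For a continuity set $A$ I would then sandwich $A$ between its interior $A^\circ$ and its closure $\overline{A}$: the chain $P(A^\circ) \le \liminf_n P_n(A) \le \limsup_n P_n(A) \le P(\overline{A})$ together with $P(\overline{A}) - P(A^\circ) = P(\partial A) = 0$ forces all four quantities to coincide, so $\lim_n P_n(A) = P(A)$.

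The converse $2)\Rightarrow 1)$ is the main obstacle, since continuity sets form only a restricted subclass and one cannot test weak convergence directly against indicators of arbitrary Borel sets. Here I would use the layer-cake (distribution-function) representation. Given a bounded continuous $f$, an affine normalization lets me assume $0 \le f \le 1$, so that $\int f\,dP = \int_0^1 P(\{f > t\})\,dt$, and likewise for each $P_n$. The key observation is that the open set $\{f > t\}$ has boundary contained in the level set $\{f = t\}$; since these level sets are pairwise disjoint for distinct $t$, at most countably many of them can carry positive $P$-mass. Hence $\{f > t\}$ is a continuity set of $P$ for all but countably many $t \in [0,1]$, and for every such $t$ hypothesis $2)$ gives $P_n(\{f>t\}) \to P(\{f>t\})$. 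As the integrands are uniformly bounded by $1$ on $[0,1]$, the bounded convergence theorem lets me pass the limit under the integral, yielding $\int f\,dP_n \to \int f\,dP$ and hence weak convergence.

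The single delicate ingredient is thus the disjointness-of-level-sets argument in the converse direction, which is exactly what converts mere pointwise convergence on continuity sets into convergence of the integrals against all bounded continuous test functions; the forward direction reduces entirely to the explicit majorant $g_k$ and is where metrizability of $S$ enters.
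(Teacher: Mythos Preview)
Your argument is correct. Both implications are handled by the standard devices: the Lipschitz majorants $g_k$ for the forward direction, and the layer-cake representation together with the disjointness of level sets $\{f=t\}$ for the converse. The only assumption you add beyond the bare statement is metrizability of $S$, which is harmless here since the application is to $H^2(D)$.

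The paper itself does not give a proof at all: it simply refers the reader to Theorem~2.1 in Billingsley's \emph{Convergence of Probability Measures}. What you have written is essentially a compressed version of the proof found there (Billingsley states the full portmanteau theorem with five equivalent conditions and proves them cyclically; you have extracted exactly the two implications needed). So there is no genuine methodological difference to compare --- you supply the details that the paper outsources to a citation.
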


\begin{proof}
	For the proof, see {\cite[Theorem 2.1]{PB1968}}.
\end{proof}

We also recall the Mergelyan theorem on the approximation of analytic functions by polynomials.

\begin{theorem}
Let $K \subset \Co$ be a compact subset with connected complement, and let $f(s)$ be a continuous function on $K$ analytic inside $K$.
Then, for any $\varepsilon>0$, there exists a polynomial $p(s)$
such that
$$
\sup_{s \in K}|f(s)-p(s)|<\varepsilon.
$$
\end{theorem}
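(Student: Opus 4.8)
The plan is to deduce this (Mergelyan's theorem) from two classical ingredients: Runge's theorem and the Cauchy--Green (Cauchy--Pompeiu) representation, organized by Vitushkin's localization idea. The first reduction uses the hypothesis on the complement directly: since $\Co\setminus K$ is connected, Runge's theorem guarantees that every function holomorphic on an open neighborhood of $K$ is a uniform limit on $K$ of polynomials, because the poles of the approximating rational functions can be pushed off to $\infty$ through the single unbounded complementary component. Hence it suffices, given $\varepsilon>0$, to construct a function holomorphic on some neighborhood of $K$ that approximates $f$ within $\varepsilon$ uniformly on $K$.

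To build such a function I would first extend $f$ by Tietze's theorem to $f\in C(\Co)$ with compact support, and let $\omega$ denote its modulus of continuity. For small $\delta>0$ I mollify, $f_\delta=f*\rho_\delta$, so that $\|f-f_\delta\|_\infty\le\omega(\delta)$, the function $f_\delta$ is smooth, and — crucially, since $f$ is holomorphic in $\mathrm{int}\,K$ — the derivative $\bar\partial f_\delta$ is supported in the thin $\delta$-collar $X_\delta$ of $\Co\setminus\mathrm{int}\,K$ and satisfies $|\bar\partial f_\delta|\le C\omega(\delta)/\delta$. The Cauchy--Green formula then represents $f_\delta(z)=-\tfrac1\pi\iint_{X_\delta}\frac{\bar\partial f_\delta(\zeta)}{\zeta-z}\,dA(\zeta)$, exhibiting $f_\delta$ as a superposition of Cauchy kernels whose singularities $\zeta$ all lie in the collar $X_\delta$ close to $\partial K$.

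The heart of the argument is to replace each kernel $\frac{1}{\zeta-z}$ by a kernel $g_\zeta(z)$ holomorphic on a fixed neighborhood of $K$ and agreeing with $\frac{1}{\zeta-z}$ to second order at $\infty$. Here the connectedness of $\Co\setminus K$ enters decisively: near any $\zeta$ within $O(\delta)$ of $K$ it supplies a connected piece $E_\zeta$ of the complement of diameter $\gtrsim\delta$; choosing a pole $b\in E_\zeta\subset\Co\setminus K$ and setting $g_\zeta(z)=\frac{1}{b-z}+\frac{b-\zeta}{(b-z)^2}$ matches the Laurent expansion of $\frac{1}{\zeta-z}$ through order $z^{-2}$, so that $|g_\zeta(z)-\frac{1}{\zeta-z}|\le C\delta^2/|z-\zeta|^3$ once $|z-b|\gg\delta$, with a crude bound $\lesssim 1/\delta$ nearby. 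Defining $\Phi_\delta(z):=-\tfrac1\pi\iint_{X_\delta}\bar\partial f_\delta(\zeta)\,g_\zeta(z)\,dA(\zeta)$ produces a function holomorphic on a neighborhood of $K$, and subtracting gives $f_\delta(z)-\Phi_\delta(z)=-\tfrac1\pi\iint_{X_\delta}\bar\partial f_\delta(\zeta)\big(\tfrac{1}{\zeta-z}-g_\zeta(z)\big)\,dA(\zeta)$. Feeding in $|\bar\partial f_\delta|\le C\omega(\delta)/\delta$, the kernel bound, and the thinness of the collar $X_\delta$ should yield $\|f_\delta-\Phi_\delta\|_K\le C\omega(\delta)$ with $C$ independent of $\delta$; hence $\|f-\Phi_\delta\|_K\le(1+C)\omega(\delta)\to0$, and Runge's theorem finishes.

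I expect the kernel-modification step to be the \emph{main obstacle}, for two linked reasons. First, one must secure, uniformly in $\zeta$, a complementary set near $\zeta$ whose size is comparable to $\delta$ and whose induced pole stays off a \emph{common} neighborhood of $K$; a literal disk of radius $\sim\delta$ in $\Co\setminus K$ need not exist (think of $\zeta$ lodged in a thin channel of the complement), so the clean double-pole construction above must be replaced by an estimate based on the analytic capacity of the continuum $E_\zeta$, where connectedness guarantees a long — if thin — continuum of diameter $\gtrsim\delta$ and Koebe's one-quarter theorem (via the Ahlfors function of $E_\zeta$) provides capacity $\gtrsim\delta$ and the requisite bounded kernel. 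Second, one must control the superposition: the per-kernel error decays like $\delta^2/|z-\zeta|^3$ precisely so that integrating it against $\bar\partial f_\delta$ over the collar $X_\delta$ gives a bound independent of $\delta$, whereas the naive estimate diverges. Making this summation uniform in $\delta$, together with the capacity-based kernel bounds, is the quantitative core of the theorem.
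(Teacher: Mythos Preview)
Your proposal sketches a genuine proof of Mergelyan's theorem via the Vitushkin localization scheme (Tietze extension, mollification, Cauchy--Green representation, kernel modification backed by analytic-capacity estimates, then Runge). The paper, by contrast, does not prove this statement at all: its ``proof'' is a one-line citation of Mergelyan's original 1952 paper, and the theorem is used purely as a black box in the proof of Theorem~\ref{te:MixJoinUniMod}. So there is no argument in the paper to compare against; your sketch supplies vastly more than the paper does.

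On the substance of your outline: it is essentially the modern route to Mergelyan, and you have correctly located the crux --- the naive double-pole replacement $g_\zeta$ fails when the complement is thin near $\zeta$, and one must instead exploit that a connected piece of $\Co\setminus K$ of diameter $\gtrsim\delta$ has analytic capacity $\gtrsim\delta$ (via the Ahlfors function and Koebe) to build a bounded holomorphic kernel with the correct first two Laurent coefficients. One refinement worth flagging: to make the estimate $\|f_\delta-\Phi_\delta\|_K\le C\omega(\delta)$ go through uniformly, the standard execution does not modify kernels pointwise in $\zeta$ but first introduces a smooth partition of unity subordinate to a covering of the $\delta$-collar by disks of radius $\sim\delta$, localizes $\bar\partial f_\delta$ into finitely overlapping pieces, and performs the kernel correction once per disk; this is what keeps the summation controlled independently of $\delta$. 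With that adjustment your plan is the standard Vitushkin argument.
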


\begin{proof}
The proof of the theorem can be found in \cite{SNM1952}.	
\end{proof}

\medskip

\begin{proof}[Proof of Theorem~\ref{te:MixJoinUniMod}]
Since $f_1(s)\not = 0$ on $K_1$, by the Mergelyan theorem, there exist polynomials $p_1(s)$ and $p_2(s)$ such that, for every $\varepsilon>0$,
\begin{align}\label{ineq:zetas}
	\sup_{s\in K_1}\big|f_1(s)-\exp(p_1(s))\big|<\frac{\varepsilon}{2}
	\quad \text{and} \quad
	\sup_{s\in K_2}\big|f_2(s)-p_2(s)\big|<\frac{\varepsilon}{2}.
\end{align}

In view of Theorem~\ref{lem:Supp}, an element $\big(\exp(p_1(s)),p_2(s)\big)$ belongs to the set $S$, i.e., to the support of the measure $P_Z$.

Consider the set
$$
G=\bigg\{
(g_1,g_2)\in H^2(D): \sup_{s\in K_1}\left\vert g_1(s)-\exp(p_1(s))\right\vert<\frac{\varepsilon}{2}, \ \sup_{s\in K_2}\left\vert g_2(s)-p_2(s)\right\vert<\frac{\varepsilon}{2}\bigg\}.
$$
This set is an open subset in $H^2(D)$ and, by Theorem~\ref{lem:Supp}, an open neigh\-bour\-hood of an element $\big(\exp({p_1(s)}),p_2(s)\big)$. Therefore, by Theorems~\ref{te:JoinLim} and \ref{te:ContSetEquiv}, the inequality $P_Z(G)>0$ holds.
	
Now, for $f_1(s)$ and $f_2(s)$ fulfilling the conditions of Theorem~\ref{te:MixJoinUniMod},  define the set $G_\varepsilon$ by
\[
G_{{\varepsilon}}=\bigg\{(g_1,g_2)\in H^2(D): \sup_{s\in K_1}|g_1(s)-f_1(s)|<\frac{\varepsilon}{2}, \ \sup_{s\in K_1}|g_2(s)-f_2(s)|<\frac{\varepsilon}{2} \bigg \}
\]
with the boundary
\begin{align*}
		\partial G_{\epsilon}=&\big\{(g_1,g_2)\in H^2(D): \sup_{s\in K_1}|g_1(s)-f_1(s)|<\varepsilon, \ \sup_{s\in K_2}|g_2(s)-f_2(s)|=\varepsilon\big \} \\
		&\cup\big\{(g_1,g_2)\in H^2(D): \sup_{s\in K_1}|g_1(s)-f_1(s)|=\varepsilon, \ \sup_{s\in K_2}|g_2(s)-f_2(s)|<\varepsilon\big \} \\
		&\cup\big\{(g_1,g_2)\in H^2(D): \sup_{s\in K_1}|g_1(s)-f_1(s)|=\varepsilon, \ \sup_{s\in K_2}|g_2(s)-f_2(s)|=\varepsilon\big \}.
\end{align*}
Easy to see that with different $\varepsilon_1>0$ and  $\varepsilon_2>0$ the boundaries $\partial G_{\varepsilon_1}$ and $\partial G_{\varepsilon_2}$ are disjoint. Therefore, only countable many sets $\partial G_\varepsilon$ can have positive measure $P_Z$. Hence, $P_Z(\partial G_\varepsilon)=0$ for at most countable set of values $\varepsilon>0$, i.e., $G_\varepsilon$ is a continuity set of $P_Z$ for all but at most countable many $\varepsilon>0$. Moreover, in view of \eqref{ineq:zetas}, $G \subset G_\varepsilon$. Therefore, by Theorem~\ref{te:JoinLim}, we have
$$
\lim_{T\to \infty}P_T({G_\varepsilon})=P_Z(G_{\varepsilon})>0
$$
for all but at most countable many $\varepsilon>0$. This together with the definitions of $P_T$ and $G_\varepsilon$ prove the theorem.
\end{proof}

\section{Concluding remarks}

Theorem~\ref{te:MixJoinUniMod} can be generalised in a following direction.

Suppose that $\alpha_j$ is a real number such that $0<\alpha_j<1$, and $l(j)$ is a positive integer, $j=1, \dots, r$. Let $\lambda=l(1)+\dots+l(r)$. For each $j$ and $l$, $1\leq j\leq r$, $1\leq l\leq l(j)$, let $\gB_{jl}=\{b_{mjl}:m\in\N_0\}$ be a periodic sequence of complex numbers $b_{mjl}$ with minimal period $k_{jl}$, and let $\zeta(s,\alpha_j;\gB_{jl})$ be the corresponding periodic Hurwitz zeta-function. Denote by $k_j$ the least common multiple of periods $k_{j1}, \dots, k_{jl(j)}$. Let $B_j$ be a matrix consisting of elements $b_{mjl}$ from the periodic sequences $\gB_{jl}$, $j=1,\dots,r$, $l=1,\dots,l(j)$, i.e.,
\[
B_j : = \begin{pmatrix}
	b_{1j1} & \cdots & b_{1jl(j)} \\
	\vdots & \ddots & \vdots \\
	b_{k_jj1} & \cdots & b_{k_jjl(j)}
\end{pmatrix}, \quad j=1,\dots,r.
\]

\begin{theorem}\label{te:apMixJoinUniMod}
Suppose that $\alpha_1,\dots,\alpha_r$ are algebraically independent over $\Q$, $\rank\left(B_j\right)=l(j)$, $1\leq j\leq r$, and $\varphi(s)$ belongs to the class $\widetilde{S}$. Let $K_1$ be a compact subset of $D(\sigma^*,1)$ and $K_{2jl}$ be a compact subset of $D(\frac{1}{2},1)$, all of them with connected complements. Suppose that $f_1(s)\in H^c_0(K_1)$ and $f_{2jl}(s)\in H^c(K_{2jl})$. Then, for all but at most countably many $\varepsilon>0$, it holds that
\begin{align}\label{mod:UnivIneq}
		\lim_{T\to\infty}\frac{1}{T}\meas
		\bigg\{\tau\in [0, T]: &
		\sup_{s\in K_1} |\varphi(s+i\tau)-f_1(s)|<\varepsilon,\cr &\sup_{1\leq j\leq r}\sup_{1\leq l\leq l(j)}\sup_{s\in K_{2jl}}|\zeta(s+i\tau,\alpha_j;\gB_{jl})-f_{2jl}(s)|<\varepsilon
		\bigg\}>0.
\end{align}
\end{theorem}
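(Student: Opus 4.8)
\textbf{Proof plan for Theorem~\ref{te:apMixJoinUniMod}.}
The plan is to follow exactly the same scheme as in the proof of Theorem~\ref{te:MixJoinUniMod}, replacing the two-dimensional setting by the $(1+\lambda)$-dimensional one. First I would set up the appropriate space of analytic functions: put $H^{1+\lambda}(D):=H(D_1)\times\prod_{j=1}^{r}\prod_{l=1}^{l(j)}H(D_{2jl})$, where $D_1$ is an open subset of $D_\varphi$ containing $K_1$ and each $D_{2jl}$ is an open subset of the relevant half-plane containing $K_{2jl}$ and avoiding the possible pole at $s=1$. One then defines, with $Z(\ul s+i\tau)=\bigl(\varphi(s_1+i\tau),(\zeta(s_{2jl}+i\tau,\alpha_j;\gB_{jl}))_{j,l}\bigr)$, the measures $P_T(A)=\frac1T\meas\{\tau\in[0,T]:Z(\ul s+i\tau)\in A\}$ for $A\in\Bor{H^{1+\lambda}(D)}$.

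The two probabilistic ingredients must be upgraded. For the limit theorem (the analogue of Theorem~\ref{te:JoinLim}), one works over the torus $\Omega=\Omega_1\times\prod_{j=1}^{r}\Omega_{2,j}$ with $\Omega_{2,j}=\prod_{m=0}^{\infty}\widehat\gamma$, equipped with the Haar measure; the crucial point is that the algebraic independence of $\alpha_1,\dots,\alpha_r$ over $\Q$ guarantees the joint ergodicity / linear independence needed so that the one-parameter group $\tau\mapsto(p^{-i\tau},(m+\alpha_j)^{-i\tau})$ is uniformly distributed in $\Omega$, which is precisely the hypothesis driving the weak convergence of $P_T$ to the distribution $P_Z$ of the corresponding random element $Z(\ul s,\omega)$. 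This is already available in the literature on several periodic Hurwitz zeta-functions (the references \cite{RKKM2017}, \cite{AL2019} treat collections of periodic Hurwitz zeta-functions), so I would invoke it. For the support (the analogue of Theorem~\ref{lem:Supp}), one needs that the support of $P_Z$ is $S_\varphi\times\prod_{j,l}H(D_{2jl})$; here the rank condition $\rank(B_j)=l(j)$ is exactly what is needed to show that the random elements $\zeta(s,\alpha_j,\omega_2;\gB_{jl})$, $1\le l\le l(j)$, are "independent enough" that their joint support is the full product of $H(D_{2jl})$'s — this is the standard computation (linear independence over $\Co$ of the relevant Dirichlet-series vectors, using the full-rank matrix), combined with the positive-density method requiring $\varphi\in\widetilde S$ for the $S_\varphi$ component.

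Granting these two results, the rest is verbatim the argument already given: by the Mergelyan theorem choose polynomials $p_1,p_2,\dots$ so that $\exp(p_1)$ approximates $f_1$ on $K_1$ and each $p_{2jl}$ approximates $f_{2jl}$ on $K_{2jl}$ within $\varepsilon/2$; the tuple $(\exp(p_1),(p_{2jl}))$ lies in the support by the support theorem, so the open set $G$ of functions $\varepsilon/2$-close to it has $P_Z(G)>0$; the sets $G_\varepsilon$ of functions $\varepsilon$-close to $(f_1,(f_{2jl}))$ have boundaries $\partial G_\varepsilon$ that are pairwise disjoint for distinct $\varepsilon$ (the boundary is the union of the finitely many "sphere" conditions, one factor being at distance exactly $\varepsilon$), so only countably many of them can have positive $P_Z$-measure; for the remaining $\varepsilon$, $G_\varepsilon$ is a $P_Z$-continuity set containing $G$, hence by Theorem~\ref{te:ContSetEquiv} and the limit theorem $\lim_{T\to\infty}P_T(G_\varepsilon)=P_Z(G_\varepsilon)\ge P_Z(G)>0$, which unwinds to \eqref{mod:UnivIneq}. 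I expect the only genuine obstacle to be the support computation under the rank hypothesis — one must carefully verify that $\rank(B_j)=l(j)$ together with algebraic independence of the $\alpha_j$ yields the required joint denseness for the whole $\lambda$-tuple of periodic Hurwitz zeta-functions; everything else is a routine adaptation of the dimension-two proof.
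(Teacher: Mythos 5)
Your proposal follows essentially the same route as the paper: both reduce the theorem to the $(\lambda+1)$-dimensional limit theorem and support theorem already established in \cite{RKKM2017} (where the algebraic independence of $\alpha_1,\dots,\alpha_r$ and the condition $\rank(B_j)=l(j)$ enter), and then repeat verbatim the Mergelyan/continuity-set argument of Theorem~\ref{te:MixJoinUniMod} with the sets $G$ and $G_\varepsilon$ replaced by their $H^{\lambda+1}(D)$ analogues. The support computation you flag as the only possible obstacle is exactly what the paper also delegates to \cite{RKKM2017}, so no new work is needed there.
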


\begin{proof}
	In \cite{RKKM2017}, the joint mixed universality it was proved  under the same conditions as in the theorem instead $\lim$ studying $\liminf$ for every $\varepsilon>0$. Therefore, arguing in similar way as in the proof of Theorem~\ref{te:MixJoinUniMod}, we can show the universality inequality~\eqref{mod:UnivIneq}.
	
	However, we give some highlights. Let $H^{\lambda+1}(D):=H(D_1)\times\underbrace{H(D_2)\times H(D_2)}_\lambda$, and let $p_{2jl}(s)$ be a polynomials satisfying the second inequality of \eqref{ineq:zetas} for each $j=1,\dots,r$, $l=1,...,l(j)$.
	
	Instead of the set $G$ in the proof of Theorem~\ref{te:MixJoinUniMod}, we consider the set
\begin{align*}
	{\underline G}=\bigg\{&
	(g_1, g_{211},\dots, g_{21l(1)},\dots, g_{2r1},\dots, g_{2rl(r)})\in H^{\lambda+1}(D): \\
	&\quad \sup_{s\in K_1}|g_1(s)-\exp(p_1(s))|<\frac{\varepsilon}{2}, \ \sup_{1\leq j\leq r}\sup_{1\leq l\leq l(j)}\sup_{s\in K_{2jl}}|g_{2jl}(s)-p_{2jl}(s)|<\frac{\varepsilon}{2}\bigg\},
\end{align*}
and show that $P_{\underline Z}(\underline{G})>0$, where $P_{\underline Z}$ is a distribution of the $H^{\lambda+1}(D)$-random element constructed for the collections of zeta-functions in the theorem. For details, we refer to \cite{RKKM2017}.

Next we define the set
\begin{align*}
	{\underline G_\varepsilon}=\bigg\{&
	(g_1, g_{211},\dots, g_{21l(1)},\dots, g_{2r1},\dots, g_{2rl(r)})\in H^{\lambda+1}(D): \\
	&\quad \sup_{s\in K_1}|g_1(s)-f_1(s)|<{\varepsilon}, \ \sup_{1\leq j\leq r}\sup_{1\leq l\leq l(j)}\sup_{s\in K_{2jl}}|g_{2jl}(s)-f_{2jl}(s)|<{\varepsilon}\bigg\}
\end{align*}
and obtain that it is a continuity set of the measure $P_{\underline Z}$ for all but at most countably many $\varepsilon>0$. Again, arguing as for $G$ and $G_\varepsilon$, we get that ${\underline G}\subset {\underline G}_\varepsilon$. Therefore, for  all but at most countably many $\varepsilon>0$, $\lim\limits_{T \to \infty} P^*_T({\underline G}_\varepsilon)=P_{\underline Z}({\underline G}_\varepsilon)>0$. In view of the similarity of $P^*_T$'s construction to $P_T$ extending a collection of the periodic Hurwitz zeta-functions (for exact definition of $P^*_T$, see p.~195 in  \cite{RKKM2017}), this and the definition of ${\underline G}_\varepsilon$  complete the proof.
\end{proof}

Finally, we would like to mention that Theorem~\ref{te:apMixJoinUniMod} can be shown under different conditions than  the algebraic independence over $\Q$ of the parameters the parameters $\alpha_1,\dots,\alpha_r$. Particularly, we can prove that the universality inequality \eqref{mod:UnivIneq} holds if the elements of the set $\big\{(\log p: p \in \Pri), (\log(m+\alpha_j): m \in \N_0, j=1,...,r)\big\}$ are linearly independent over $\Q$.


Address of the auhtors:\\
Department of Mathematics and Statistics, \\
Faculty of Informatics, \\
Vytautas Magnus University, \\
Vileikos 8, Kaunas LT-44404, Lithuania\\ \\
e-mails: roma.kacinskaite@vdu.lt, benjaminas.togobickij@vdu.lt

\end{document}